\documentclass{amsart}

\usepackage{amssymb,amscd,amsthm,amsxtra}
\usepackage{dsfont,latexsym}
\usepackage{amsrefs}
\usepackage{mathrsfs}

\vfuzz3pt 
\hfuzz2pt 

\newtheorem{theorem}{Theorem}[section]
\newtheorem{corollary}[theorem]{Corollary}
\newtheorem{lemma}[theorem]{Lemma}

\newcommand{\R}{{\mathds R}}
\newcommand{\N}{{\mathds N}}

\renewcommand{\le}{\leqslant}

\renewcommand{\ge}{\geqslant}

\begin{document}

\author[S. Dipierro]{Serena Dipierro}
\address[Serena Dipierro]{Maxwell Institute for Mathematical
Sciences and School of Mathematics,
University of Edinburgh, James Clerk Maxwell Building,
Peter Guthrie Tait Road, Edinburgh EH9 3FD, United Kingdom}
\email{serena.dipierro@ed.ac.uk}

\author[O. Savin]{Ovidiu Savin}
\address[Ovidiu Savin]{Department of Mathematics, Columbia University,
2990 Broadway,
New York NY 10027, USA}
\email{savin@math.columbia.edu}

\author[E. Valdinoci]{Enrico Valdinoci}
\address[Enrico Valdinoci]{Weierstra{\ss} Institut
f\"ur Angewandte Analysis und Stochastik,
Mohrenstra{\ss}e 39, 10117 Berlin, Germany}
\email{enrico.valdinoci@wias-berlin.de}

\thanks{
The first author has been supported by EPSRC grant  EP/K024566/1
``Monotonicity formula methods for nonlinear PDEs''.
The second author has been supported by 
NSF grant DMS-1200701.
The third author has been supported by ERC grant 277749 ``EPSILON Elliptic
Pde's and Symmetry of Interfaces and Layers for Odd Nonlinearities''
and PRIN grant 201274FYK7
``Aspetti variazionali e
perturbativi nei problemi differenziali nonlineari''.}

\title[$s$-harmonic functions are dense]{All functions
are locally $s$-harmonic\\
up to a small error}

\begin{abstract}
We show that we can approximate every
function~$f\in C^{k}(\overline{B_1})$
with a $s$-harmonic function in~$B_1$ that vanishes outside
a compact set.

That is, $s$-harmonic functions are dense in $C^{k}_{\rm{loc}}$.
This result is clearly in contrast with the rigidity
of harmonic functions in the classical case and can be viewed
as a purely nonlocal feature.
\end{abstract}

\subjclass[2010]{35R11, 60G22, 35A35, 34A08.}
\keywords{Density properties, approximation, $s$-harmonic functions.}

\maketitle

\section{Introduction}

It is a well-known fact that harmonic functions are very rigid.
For instance, in dimension~$1$, they reduce to a linear function
and, in any dimension, they never possess local extrema.

The goal of this paper is to show that the situation for
fractional harmonic functions is completely different, namely
one can fix any function in a given domain and find
a $s$-harmonic function arbitrarily close to it.

Heuristically speaking, the reason for this phenomenon is that
while classical harmonic functions are determined
once their trace on the boundary is fixed, in the fractional setting
the operator sees all the data outside the domain. Hence,
a careful choice of these data allows a $s$-harmonic function
to ``bend up and down'' basically without any restriction.

The rigorous statement of this fact is in the following Theorem~\ref{MAIN}.
For this, we recall that, given~$s\in(0,1)$, the fractional Laplace
operator of a function~$u$ is defined (up to a normalizing constant) as
$$ (-\Delta)^s u(x):=\int_{\R^n} \frac{2u(x)-u(x+y)-u(x-y)}{|y|^{n+2s}}\,dy.$$
We refer to~\cites{landkof, stein, silvestre, guida}
for other equivalent definitions, motivations and applications.

\begin{theorem}\label{MAIN}
Fix~${k} \in \N$. Then, given any function~$f\in C^{k}(\overline{B_1})$
and any~$\epsilon>0$, there exist~$R>1$ and~$u\in H^s(\R^n)\cap C^s(\R^n)$
such that
$$ \left\{
\begin{matrix}
(-\Delta)^s u =0 & {\mbox{ in }} B_1,\\
u=0& {\mbox{ in }}\R^n\setminus B_R
\end{matrix}
\right.$$
and
$$ \| f - u \|_{C^{k}(B_1)}\le\epsilon.$$
\end{theorem}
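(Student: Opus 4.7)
The plan is to argue by Hahn--Banach duality. Consider the space
\[
V := \{u \in H^s(\R^n) \cap C^s(\R^n) : (-\Delta)^s u = 0 \text{ in } B_1,\ u = 0 \text{ outside some ball}\}
\]
of candidate approximants, and reduce the theorem to showing that $V|_{B_1}$ is dense in $C^k(\overline{B_\rho})$ for every $\rho<1$ (this matches the $C^k_{\mathrm{loc}}$ density claimed in the abstract). Suppose density fails for some $\rho<1$. Then by Hahn--Banach there is a nonzero distribution $T$ of order $\le k$, supported in $\overline{B_\rho}$, that annihilates every restriction $u|_{B_1}$ with $u\in V$. Now each such $u$ is the Poisson extension of its (compactly supported) exterior trace $g = u|_{\R^n\setminus B_1}$,
\[
u(x) = c_{n,s}\int_{|y|>1}\left(\frac{1-|x|^2}{|y|^2-1}\right)^{\!s}\frac{g(y)}{|x-y|^n}\,dy,\qquad x\in B_1.
\]
Pairing with $T$ and swapping $T_x$ with the $y$-integral (legitimate since $T$ is supported in $\overline{B_\rho}\subset B_1$, where $P_s(\,\cdot\,,y)$ is smooth for each fixed $|y|>1$), the orthogonality $\langle T,u\rangle=0$ for every admissible $g$ forces $\Phi(y):=T_x[P_s(x,y)]$ to vanish on $\{|y|>1\}$; real-analyticity of $\Phi$ on $\R^n\setminus\mathrm{supp}(T)$ then upgrades this to $\Phi\equiv 0$ on the whole of this open set.

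I would next exploit the Gegenbauer generating function
\[
|x-y|^{-n}=\sum_{j\ge 0}C_j^{n/2}(\hat x\cdot\hat y)\,|x|^j\,|y|^{-n-j},\qquad |x|<|y|,
\]
to expand $\Phi$ asymptotically as $|y|\to\infty$; matching coefficients of $|y|^{-n-j}$ gives
\[
T_x\bigl[(1-|x|^2)^s\,|x|^j\,C_j^{n/2}(\hat x\cdot\hat y)\bigr]=0\qquad\text{for all }j\ge 0\text{ and all }\hat y\in S^{n-1}.
\]
The decisive ingredient is then a spanning lemma: as $\hat y$ varies over $S^{n-1}$, the zonal polynomials $x\mapsto |x|^j C_j^{n/2}(\hat x\cdot\hat y)$ linearly span the whole space $\mathcal P_j$ of homogeneous polynomials of degree $j$. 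By $SO(n)$-invariance their span must be a sum of irreducible components $|x|^{2m}\mathcal H_{j-2m}$ in the Fischer decomposition of $\mathcal P_j$, and a direct calculation (already transparent in the base case $|x|^2\,C_2^{n/2}(\hat x\cdot\hat y)=-\tfrac{n}{2}|x|^2+\tfrac{n(n+2)}{2}(x\cdot\hat y)^2$) shows each such component is hit with nonzero weight.

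Granted the spanning lemma, $T_x[(1-|x|^2)^s q(x)]=0$ for every polynomial $q$. Since $(1-|x|^2)^s$ is smooth and strictly positive on $\overline{B_\rho}$, multiplication by this factor is a $C^k$-isomorphism, so $\tilde T:=(1-|x|^2)^s\,T$ is a distribution of order $\le k$ that annihilates every polynomial. Polynomials being dense in $C^k(\overline{B_\rho})$ (e.g.\ by Bernstein polynomials), we conclude $\tilde T=0$, and hence $T=0$, contradicting the choice of $T$. The hardest step of the plan is the spanning lemma: it is precisely what upgrades the \emph{a priori} rather thin Poisson-kernel identity into orthogonality against the entire polynomial ring, and so it is the mechanism responsible for the dramatic loss of rigidity that Theorem~\ref{MAIN} expresses.
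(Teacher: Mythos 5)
Your proposal is a genuine alternative route that would establish the theorem, but it is not the paper's argument. The paper first reduces (via a smooth Stone--Weierstrass lemma) to approximating a single monomial $x^\beta/\beta!$, proves a \emph{finite-dimensional} spanning statement (its Theorem~\ref{T1}) by blowing up the Poisson extension \emph{near the boundary} to produce the profile $(x\cdot e)_+^s$, and the arithmetic fact that drives everything is that $s(s-1)\cdots(s-|\alpha|+1)\ne 0$ since $s\notin\Z$; a final rescaling then suppresses the higher-order derivatives. You instead dualize against the \emph{whole space} at once with a compactly supported distribution $T$ of order $\le k$, expand the Poisson kernel in Gegenbauer polynomials \emph{at infinity} rather than near the boundary, and the arithmetic fact that does the corresponding work is the nonvanishing of the connection coefficients $c_{j,m}$ in $C_j^{n/2}=\sum_m c_{j,m}\,C_{j-2m}^{(n-2)/2}$, so that the span over $\hat y$ is all of $\mathcal P_j$. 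Both proofs rest on the same pillars -- duality, the fractional Poisson kernel, and polynomial density -- but you use them globally where the paper uses them one derivative vector at a time; your version makes the mechanism of nonlocal flexibility quite transparent (the exterior data lives on an open set, so it carries all radial moments), at the cost of a harder combinatorial lemma, while the paper's version is more elementary and self-contained.

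Two caveats. First, the spanning lemma is, as you say, the crux, and you only verify $j=2$. The general fact is true: by $SO(n)$-equivariance the span is a direct sum of Fischer components $|x|^{2m}\mathcal H_{j-2m}$, and each is hit because the Gegenbauer connection coefficients $c_{j,m}$ are strictly positive whenever $n/2>(n-2)/2>0$, i.e.\ for $n\ge 3$ (for $n=1,2$ one needs a small separate argument, with $C^0$ replaced by Chebyshev polynomials). As written, though, this remains an assertion rather than a proof, and it is exactly the step that would need to be carried out in detail. Second, the remark that $\Phi(y)=T_x[P_s(x,y)]$ is real-analytic on $\R^n\setminus\mathrm{supp}(T)$ is not correct: $P_s$ carries the factor $(|y|^2-1)^{-s}$, which is singular and not analytic across $|y|=1$, so $\Phi$ does not continue to the inside of the sphere. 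Fortunately you never actually use this: knowing $\Phi\equiv 0$ on $\{|y|>1\}$ suffices, because the Gegenbauer expansion converges there and the powers $|y|^{-n-j}$ are linearly independent on any interval, so each coefficient $T_x\bigl[(1-|x|^2)^s|x|^jC_j^{n/2}(\hat x\cdot\hat y)\bigr]$ must vanish separately. With these two points repaired the argument closes.
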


As usual, in Theorem~\ref{MAIN}, we have denoted by~$C^{k}(\overline{B_1})$
the space of all the functions~$f:\overline{B_1}\rightarrow\R$
that possess an extension~$\tilde f\in C^{k}(B_{1+\mu})$
(i.e. $\tilde f=f$ in~$B_1$), for some~$\mu>0$.
\medskip

We also mention that an important rigidity feature
for classical harmonic functions is imposed by Harnack
inequality: namely if~$u$ is harmonic and non-negative in~$B_1$
then $u(x)$ and~$u(y)$ are comparable for any~$x$, $y\in B_{1/2}$.
A striking difference with the nonlocal case is that
this type of Harnack
inequality fails for the fractional Laplacian
(namely it is necessary to require that~$u$ is non-negative
in the whole of~$\R^n$ and not only in~$B_1$, see e.g.
Theorem 2.2 in~\cite{Kas}).
As an application of Theorem~\ref{MAIN}, we point out that one
can construct examples of $s$-harmonic functions
with a ``wild'' behavior,
that oscillate as many times as we want, and reach interior extrema
basically at any prescribed point. In particular,
one can construct $s$-harmonic functions to be used as barriers
basically without any geometric restriction.
\medskip

As a further observation, we would like to stress that, while
Theorem~\ref{MAIN} reveals a purely nonlocal phenomenon,
a similar result does not hold for any nonlocal operator.
For instance, it is not possible to replace ``$s$-harmonic
functions'' with ``nonlocal minimal surfaces'' in the statement
of Theorem~\ref{MAIN}, that is it is not true that any graph
may be locally approximated by nonlocal minimal surfaces.
Indeed, the uniform density estimates satisfied by the nonlocal
minimal surfaces prescribe a severe geometric restriction
that prevent the formation of sharp edges and thin spikes.

We refer to~\cite{CRS} for the definition of nonlocal minimal
surfaces and for their density properties: as a matter
of fact, one of the consequence of Theorem~\ref{MAIN}
is that density properties do not hold true
for $s$-harmonic functions, so $s$-harmonic functions
and nonlocal minimal surfaces may have very different
behaviors.
\medskip

Finally, we would like to point out that, while Theorem~\ref{MAIN}
states that ``up to a small error, all functions are
$s$-harmonic'', it is not true that ``all functions are $s$-harmonic''
(or, more precisely, that any given
function, say in~$B_1$, may be conveniently
extended
outside~$B_1$ to make it $s$-harmonic near the origin).
For instance, any function that vanishes on an open subset of $B_1$
cannot be extended to a function that is $s$-harmonic in $B_1$,
unless it vanishes identically, in view of the Unique Continuation
Principle (see~\cite{FF}). This provides an example of a function
which is not $s$-harmonic in $B_1$ (but, by Theorem~\ref{MAIN}, may
be arbitrarily well approximated by $s$-harmonic functions).
\medskip

We think that it is an interesting problem to determine whether a density 
result as in Theorem~\ref{MAIN}
holds true under additional prescriptions on the function~$u$: for instance, whether one can require also that~$u$ is supported in a ball of universal radius (i.e. independent 
of~$\epsilon$) or whether one can have meaningful bounds on its global norms. 
Moreover, it would be interesting to find constructive and 
efficient algorithms to explicitly determine~$u$.
\medskip

The proof of Theorem~\ref{MAIN} can be summarized in three steps: 
\begin{itemize} 
\item One may reduce to the case in which~$f$ is a 
polynomial, by density in~$C^k(B_1)$, and so to the case in which~$f$ is 
a monomial, by the linearity of the operator. Therefore, it is enough to 
find a $s$-harmonic function that approximates~$x^\beta/\beta!$ 
in~$C^k(B_1)$; 
\item One can construct a $s$-harmonic function~$v$ with 
an arbitrarily large number of derivatives prescribed at a given point: 
in particular, one obtains a $s$-harmonic function that has the same 
derivatives as~$x^\beta/\beta!$ up to order~$|\beta|$ at the origin 
(this is indeed the main step needed for the proof); 
\item One can 
rescale the function~$v$ above by preserving the derivatives of 
order~$|\beta|$ at the origin.
By this rescaling, the higher 
order derivatives (i.e., the derivatives
of order between~$|\beta|+ 1$ and~$k$) go to zero and so they
become a better and better approximation of the higher derivatives
of~$x^\beta/\beta!$, which establishes Theorem~\ref{MAIN}.
\end{itemize}

\medskip

The rest of the paper is organized as follows: in Section~\ref{PO}
we collect some preliminary results, such as a (probably well-known) 
generalization of the Stone-Weierstrass Theorem and the
construction of a $s$-harmonic function in~$B_1$
that has a well-defined growth from the boundary.
Then, in Section~\ref{PP},
we construct a $s$-harmonic function with an arbitrarily large number
of derivatives prescribed.
This is, in a sense, already the core of our argument,
since these types of properties are typical for
the fractional case and do not hold for classical harmonic functions.
Also, from this result, the proof of Theorem~\ref{MAIN}
will follow via a scaling and approximation method.  

\section{Preliminary observations}\label{PO}

In this section we collect some auxiliary
results that will be needed in the rest of the paper.

First of all, 
we recall a version of the Stone-Weierstrass Theorem
for smooth functions. We give a quick proof of it
since in general this result is presented only
in the continuous setting.

\begin{lemma}\label{SW}
For any~$f\in C^{k}(\overline{B_1})$ and any~$\epsilon>0$
there exists a polynomial~$P$ such that~$\|f - P\|_{C^{k}(B_1)}\le\epsilon$.
\end{lemma}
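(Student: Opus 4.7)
The plan is to reduce the $C^k$-approximation problem to the classical Weierstrass theorem via a two-step smoothing procedure: first mollify $f$ with a Gaussian kernel to produce a real-analytic function that is $\epsilon/2$-close in $C^k(B_1)$, then truncate the Taylor series of this analytic mollification to obtain a polynomial that is $\epsilon/2$-close to it.

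First, starting from the extension $\tilde f\in C^k(B_{1+\mu})$ provided by the hypothesis, multiply by a smooth cutoff supported in $B_{1+\mu}$ and equal to $1$ on $\overline{B_{1+\mu/2}}$ so as to reduce, without loss of generality, to the case $\tilde f\in C^k(\R^n)$ with compact support and $\tilde f = f$ on $\overline{B_1}$. Then set $g_t:=\tilde f*G_t$, where $G_t(x)=(4\pi t)^{-n/2}\exp(-|x|^2/(4t))$ is the Gaussian heat kernel. Since $G_t$ extends to an entire function of its argument and $\tilde f$ has compact support, differentiation under the integral sign shows that $g_t$ is real-analytic on $\R^n$, with Taylor series at the origin of infinite radius of convergence. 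Moreover, for every multi-index $\alpha$ with $|\alpha|\le k$ we have $D^\alpha g_t=(D^\alpha\tilde f)*G_t$; since $\{G_t\}_{t>0}$ is a standard approximate identity and $D^\alpha\tilde f$ is continuous with compact support, $D^\alpha g_t\to D^\alpha\tilde f$ uniformly on $\R^n$ as $t\to 0^+$. Thus one can fix $t>0$ small enough that $\|g_t-f\|_{C^k(B_1)}\le \epsilon/2$.

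For the second step, since $g_t$ is real-analytic on $\R^n$ with infinite radius of convergence, the partial Taylor sums $P_N$ of $g_t$ at the origin converge to $g_t$ uniformly on $\overline{B_1}$; by term-by-term differentiation of the absolutely convergent power series, the same convergence holds for all partial derivatives up to order $k$. Hence one may choose $N$ so large that $\|P_N-g_t\|_{C^k(B_1)}\le\epsilon/2$, and the polynomial $P:=P_N$ then satisfies $\|f-P\|_{C^k(B_1)}\le\epsilon$ by the triangle inequality.

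The only technical subtlety is that the mollified function must be not merely $C^\infty$ but real-analytic, so that a Taylor expansion is actually available for truncation; this is precisely why the Gaussian mollifier (rather than a generic compactly supported one) is used. All other ingredients -- properties of approximate identities and termwise differentiation within the radius of convergence -- are entirely standard.
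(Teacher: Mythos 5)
Your proof is correct, and the first step (Gaussian mollification of a compactly supported $C^k$-extension, then $D^\alpha g_t = (D^\alpha\tilde f)*G_t \to D^\alpha\tilde f$ uniformly) coincides with the paper's. The second step, however, is a genuinely different route. The paper does not appeal to real-analyticity at all: it replaces the Gaussian kernel $G$ by the polynomial $Q$ obtained by truncating the power series of $e^{-|x|^2/\eta}$, estimates $|G-Q|$ on the set $B_3$ where the argument $x-y$ lives (since $f$ is supported in $B_2$ and $x\in B_1$), and takes $P := Q*f$, which is a polynomial because $Q$ is. You instead keep the full Gaussian, observe that $g_t = \tilde f * G_t$ extends to an entire function on $\mathbb{C}^n$, and take $P$ to be a high-order Taylor polynomial of $g_t$ at the origin, using absolute convergence of the multivariable power series (and its termwise derivatives) uniformly on compact sets. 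These give different polynomials and the bookkeeping is different. What the paper's version buys is that it stays entirely within elementary real estimates -- one never needs to invoke analyticity or discuss convergence of multivariable power series and their term-by-term derivatives, which, while standard, requires care about unordered sums and rearrangement. What your version buys is conceptual economy: the statement ``convolution with an analytic kernel of compactly supported data is entire, and entire functions are locally uniformly approximated in all $C^k$ norms by their Taylor polynomials'' packages the whole second step in one stroke. One small point you should make explicit for completeness: the partial sums $\sum_{|\alpha|\le N}$ converge uniformly on $\overline{B_1}$ because the power series converges absolutely there (so the tail $\sum_{|\alpha|>N}|c_\alpha|r^{|\alpha|}$ controls the sup norm), and $D^\gamma P_N$ is precisely the $(N-|\gamma|)$-th partial sum of the Taylor series of $D^\gamma g_t$, so the same tail estimate applies to derivatives. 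As written you gesture at this (``by term-by-term differentiation of the absolutely convergent power series'') but do not spell it out.
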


\begin{proof} 
Without loss of generality we may suppose that~$f\in C^{k}_0(B_2)$.
Also, given~$\epsilon>0$ as in the statement of Lemma~\ref{SW},
we fix~$R>0$ such that
\begin{equation}\label{R}\int_{\R^n\setminus B_R} e^{-|x|^2}\,dx\le \epsilon.\end{equation}
Then, we fix~$\eta>0$, to be taken arbitrarily small (possibly
in dependence of~$\epsilon$ and~$R$, which are fixed once and for all),
and we take~$J_\eta\in\N$ large enough such that
\begin{equation}\label{K}
\sum_{j>J_\eta}\frac{(-1)^j}{j!\,\eta^j}\le e^{-1/\eta}.\end{equation}
Let also
\begin{eqnarray*}
&& Q(x) := (\pi\eta)^{-n/2}
\sum_{j=0}^{J_\eta}\frac{(-1)^j\,|x|^{2j}}{j!\,\eta^j}, \\
&& P(x):=\int_{\R^n} f(y) \,Q(x-y)\,dy,\\
{\mbox{and }}&& G(x):=(\pi\eta)^{-n/2} e^{-|x|^2/\eta}.
\end{eqnarray*}
We remark that~$Q$ is a polynomial in~$x$, hence so is~$P$.
Moreover, by a Taylor expansion,
$$ G(x)=Q(x)+(\pi\eta)^{-n/2}
\sum_{j>J_\eta}\frac{(-1)^j\,|x|^{2j}}{j!\,\eta^j} $$
and so, using~\eqref{K}, we conclude that, for any~$x\in B_3$,
\begin{equation}\label{KK}
|G(x)-Q(x)|\le e^{-1/\sqrt\eta},\end{equation}
provided that~$\eta$ is sufficiently small.

Now we recall~\eqref{R}
and we observe that, for any~$\alpha\in\N^n$ with~$|\alpha|\le{k}$
and any~$x\in B_1$,
\begin{eqnarray*}
&&|D^\alpha (G*f)(x)-D^\alpha f(x)|\\&=&
\left| \int_{\R^n} G(y)\Big( D^\alpha f(x-y)-D^\alpha f(x)\Big)\,dy\right|
\\ &\le& \pi^{-n/2}\,\int_{\R^n} e^{-|z|^2}\Big|D^\alpha f(x-\sqrt\eta \,z)-
D^\alpha f(x)\Big|\,dz \\
&\le& 2\pi^{-n/2}\,\epsilon\,\|f\|_{C^{k}(\R^n)}+
\pi^{-n/2}\,\int_{B_R} e^{-|z|^2}\Big|D^\alpha f(x-\sqrt\eta \,z)-
D^\alpha f(x)\Big|\,dz\\ &\le& C\left( 
\epsilon+R^n \sup_{z\in B_R}
\Big|D^\alpha f(x-\sqrt\eta \,z)-
D^\alpha f(x)\Big| \right),
\end{eqnarray*}
for some~$C>0$.
Now, if~$\eta$ is sufficiently small, we have that
$$ \sup_{|x-y|\le \sqrt\eta\, R} \Big|D^\alpha f(x)-
D^\alpha f(y)\Big|\le R^{-n}\epsilon,$$
thus we conclude that
\begin{equation}\label{C-1}
|D^\alpha (G*f)(x)-D^\alpha f(x)|\le C\epsilon,\end{equation}
for any~$\alpha\in\N^n$ with~$|\alpha|\le{k}$
and any~$x\in B_1$,
for a suitable~$C>0$.

Furthermore, using~\eqref{KK} we see that,
for any~$\alpha\in\N^n$ with~$|\alpha|\le{k}$
and any~$x\in B_1$,
\begin{eqnarray*}
|D^\alpha (G*f)(x)-D^\alpha P(x)| &=&
|D^\alpha (G*f)(x)-D^\alpha (Q*f)(x)| \\
&=&\left| \int_{B_3} \Big( G(y)-Q(y)\Big)\,D^\alpha f(x-y)\,dy
\right| \\
&\le& C\,\|f\|_{C^{k}(\R^n)}\,e^{-1/\sqrt\eta}
\\ &\le&\epsilon,\end{eqnarray*}
as long as~$\eta$ is small enough. {F}rom this and~\eqref{C-1} we obtain
$$ \|f-P\|_{C^{k}(\R^n)}\le \|f-(G*f)\|_{C^{k}(\R^n)}+
\|(G*f)-P\|_{C^{k}(\R^n)}\le C\epsilon,$$
for some~$C>0$, which is the desired result, up to
renaming~$\epsilon$.
\end{proof}

Now, we construct a $s$-harmonic function in~$B_1$
that has a well-defined growth from the boundary:
 
\begin{lemma}\label{L1}
Let~$\bar\psi\in C^\infty(\R,[0,1])$ such that~$\bar\psi(t)=0$
for any~$t\in\R\setminus (2,3)$ and~$\bar\psi(t)>0$
for any~$t\in (2,3)$. 

Let~$\psi_0(x):=\bar\psi(|x|)$ and~$\psi\in H^s(\R^n)\cap C^s(\R^n)$
be the solution
of
$$ \left\{
\begin{matrix}
(-\Delta)^s \psi =0 & {\mbox{ in }} B_1,\\
\psi=\psi_0& {\mbox{ in }}\R^n\setminus B_1.
\end{matrix}
\right.$$
Then, if~$x\in \partial B_{1-\epsilon}$, we have that
\begin{equation}\label{St}
\psi(x)=\kappa \,\epsilon^s +o(\epsilon^s)\end{equation}
as~$\epsilon\rightarrow 0^+$,
for some~$\kappa>0$.
\end{lemma}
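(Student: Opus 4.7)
The plan is to use the explicit Poisson kernel representation of the $s$-harmonic extension in the ball. For data $\psi_0$ supported outside $B_1$, the solution of the Dirichlet problem is given by the classical Riesz formula
\[
\psi(x) = c_{n,s} \int_{\R^n \setminus B_1} \left(\frac{1-|x|^2}{|y|^2-1}\right)^{\!s} \frac{\psi_0(y)}{|x-y|^n}\,dy, \qquad x \in B_1,
\]
for an explicit constant $c_{n,s}>0$. Since $\psi_0(y)=\bar\psi(|y|)$ is supported in the annular shell $\{2<|y|<3\}$, I would first factor the boundary-growth term out of the integral and write
\[
\psi(x) = c_{n,s}\,(1-|x|^2)^s\; I(x), \qquad I(x) := \int_{\{2<|y|<3\}} \frac{\bar\psi(|y|)}{(|y|^2-1)^s\,|x-y|^n}\,dy.
\]

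Next, for $x \in \partial B_{1-\epsilon}$, a direct expansion gives $(1-|x|^2)^s = (2\epsilon-\epsilon^2)^s = (2\epsilon)^s\bigl(1+O(\epsilon)\bigr)$, so the prefactor contributes $2^s\epsilon^s$ up to an $o(\epsilon^s)$ error. For the remaining integral $I(x)$, I would observe that on the support of $\bar\psi$ the quantity $(|y|^2-1)^s$ is bounded and bounded away from zero, and the distance $|x-y|$ is uniformly bounded below by $1$ for $x\in\overline{B_1}$. Thus the integrand is continuous in $x$ up to $\partial B_1$ and uniformly dominated by an integrable function, so dominated convergence yields a finite limit as $|x|\to 1^-$. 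By uniqueness of the Dirichlet problem and the radial symmetry of $\psi_0$, the function $\psi$ is itself radial, so this limit is the same for every direction $x/|x|$, and I would denote it by
\[
K := \int_{\{2<|y|<3\}} \frac{\bar\psi(|y|)}{(|y|^2-1)^s\,|e_1-y|^n}\,dy.
\]

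Putting the two pieces together, I obtain $\psi(x) = c_{n,s}\,2^s K\,\epsilon^s + o(\epsilon^s)$, and the claim follows with $\kappa := c_{n,s}\,2^s K$. The positivity $\kappa>0$ is immediate from the fact that $\bar\psi>0$ on the open interval $(2,3)$ together with the strict positivity of the rest of the integrand. The main (and really only) subtlety in this scheme is justifying that the Riesz formula indeed produces the unique $H^s \cap C^s$ solution, and that the uniform domination needed for the limit in $I(x)$ is valid up to and including points of $\partial B_1$; both facts rely on the comfortable separation between $\partial B_1$ and the support of $\psi_0$. No delicate estimates on the singular Poisson kernel near the boundary are required, because the singular factor $(1-|x|^2)^s$ has already been taken out explicitly.
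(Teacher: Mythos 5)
Your proposal is correct and follows essentially the same approach as the paper: both start from the Poisson kernel representation, factor out the boundary-growth term $(1-|x|^2)^s$, and pass to the limit in the remaining regular integral over the annular support of $\psi_0$. The only cosmetic difference is that the paper writes this integral in polar coordinates while you keep it over $\{2<|y|<3\}$ and invoke dominated convergence explicitly; the content and the constant $\kappa$ are the same.
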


\begin{proof} 
We notice that the function~$\psi\in H^s(\R^n)$ may be
constructed by the direct method of the calculus of variations,
and also~$\psi\in C^s(\R^n)$, see e.g.~\cite{ros-serra}.

Also, we use the Poisson Kernel representation
(see e.g.~\cites{landkof, claudia}) to write, for any~$x\in B_1$,
\begin{eqnarray*}
\psi(x) &=& c\,\int_{\R^n\setminus B_1} \frac{\psi_0(y)\,(1-|x|^2)^s}{
(|y|^2-1)^s\, |x-y|^n} \,dy \\
&=& c\,(1-|x|^2)^s \int_2^3 \left[ \int_{S^{n-1}} \frac{
\rho^{n-1}\bar\psi(\rho)}{ 
(\rho^2-1)^s\, |x-\rho\omega|^n} \,d\omega\right]\,d\rho,
\end{eqnarray*}
for some~$c>0$.
Now we take~$x\in B_1$, with~$|x|=1-\epsilon$, and we obtain
\begin{eqnarray*}
\psi(x)&=& c \,(2\epsilon-\epsilon^2)^s
\int_2^3 \left[ \int_{S^{n-1}} \frac{
\rho^{n-1}\bar\psi(\rho)}{
(\rho^2-1)^s\, |(1-\epsilon)e_1-\rho\omega|^n} \,d\omega\right]\,d\rho
\\&=&
2^s \,c \,\epsilon^s
\int_2^3 \left[ \int_{S^{n-1}} \frac{
\rho^{n-1}\bar\psi(\rho)}{
(\rho^2-1)^s\, |e_1-\rho\omega|^n} \,d\omega\right]\,d\rho+o(\epsilon^s)
\\ &=& \kappa \,\epsilon^s +o(\epsilon^s),\end{eqnarray*}
for some~$\kappa>0$, as desired.
\end{proof}

We observe that alternative proofs of Lemma~\ref{L1}
may be obtained from a boundary Harnack inequality
in the extended problem and from explicit barriers, see~\cites{CS, ros-serra}.
\medskip

By blowing up the functions constructed in Lemma~\ref{L1}
we obtain the existence of a sequence of~$s$-harmonic
functions approaching~$(x\cdot e)_+^s$, for a fixed unit
vector~$e$, as stated below:

\begin{corollary}\label{C1}
Fixed~$e\in \partial B_1$,
there exists a sequence of functions~$v_{e,j}\in
H^s(\R^n)\cap C^s(\R^n)$ such that~$(-\Delta)^s v_{e,j}=0$ in~$B_1(e)$,
$v_{e,j}=0$ in~$\R^n\setminus B_{4j}(e)$, and
$$ v_{e,j}(x) \rightarrow \kappa (x\cdot e)_+^s \
{\mbox{ in }} L^1(B_1(e)),$$
as~$j\rightarrow+\infty$, for some~$\kappa>0$.
\end{corollary}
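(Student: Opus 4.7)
The plan is to obtain $v_{e,j}$ by rescaling the function $\psi$ from Lemma~\ref{L1}, zooming in at the boundary point $-e\in\partial B_1$. Explicitly, I would set
$$ v_{e,j}(x):=j^s\,\psi\!\left(\frac{x}{j}-e\right). $$
The standard translation-dilation rule for $(-\Delta)^s$ then gives $(-\Delta)^s v_{e,j}(x)=j^{-s}(-\Delta)^s\psi(x/j-e)$, which vanishes on the set $\{x:x/j-e\in B_1\}=B_j(je)$; a triangle-inequality check shows $B_1(e)\subset B_j(je)$, so $v_{e,j}$ is $s$-harmonic in $B_1(e)$. Since $\psi$ is supported in $\overline{B_3}$, $v_{e,j}$ is supported in $\overline{B_{3j}(je)}\subset\overline{B_{4j}(e)}$, yielding the required vanishing outside $B_{4j}(e)$. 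The regularity $v_{e,j}\in H^s(\R^n)\cap C^s(\R^n)$ is preserved under this affine change of variables (with the $j^s$ prefactor).

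Next I would establish the pointwise limit on $B_1(e)$. Since $B_1(e)\subset\{x\cdot e>0\}$, for fixed $x\in B_1(e)$ and large $j$ the point $y_j:=x/j-e$ lies in $B_1$, and expanding
$$ |y_j|^2=1-\frac{2\,x\cdot e}{j}+\frac{|x|^2}{j^2} $$
yields the key linearization $1-|y_j|=(x\cdot e)/j+O(j^{-2})$. Feeding this into the asymptotic~\eqref{St} of Lemma~\ref{L1} (which applies uniformly as $|y_j|\to 1^{-}$ thanks to the radial symmetry of $\psi$) gives $\psi(y_j)=\kappa\bigl((x\cdot e)/j\bigr)^s+o(j^{-s})$, hence $v_{e,j}(x)\to\kappa(x\cdot e)_+^s$ pointwise on $B_1(e)$.

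Finally I would upgrade to $L^1$ convergence by dominated convergence. Since $\psi\in C^s(\R^n)$ and $\psi(-e)=0$ (because $-e\in\partial B_1$ and the exterior datum $\psi_0$ vanishes there), I get $|\psi(y)|\le[\psi]_{C^s}|y+e|^s$ for every $y$, and consequently
$$ |v_{e,j}(x)|\le[\psi]_{C^s}\,|x|^s\le C\qquad\text{on }B_1(e). $$
Since $B_1(e)$ has finite measure, dominated convergence then yields $v_{e,j}\to\kappa(x\cdot e)_+^s$ in $L^1(B_1(e))$, as required.

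The only substantive step I expect to require care is the linearization that converts the radial boundary-distance profile of~\eqref{St} into the half-space profile $(x\cdot e)^s$; the remainder of the proof is scaling bookkeeping, a Hölder-based uniform bound, and a standard domination argument.
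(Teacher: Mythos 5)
Your proposal is correct and follows essentially the same route as the paper: rescale $\psi$ by $v_{e,j}(x)=j^s\psi(x/j-e)$, transfer $s$-harmonicity and support by the scaling law for $(-\Delta)^s$, linearize $1-|x/j-e|\approx (x\cdot e)/j$ to feed into the boundary growth~\eqref{St}, and integrate. The only difference is that where the paper simply says ``integrating over $B_1(e)$ we obtain the desired convergence,'' you make the passage from pointwise to $L^1$ convergence fully explicit via the $C^s$-H\"older bound $|v_{e,j}(x)|\le [\psi]_{C^s}|x|^s$ and dominated convergence, which is a welcome small gain in rigor.
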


\begin{proof} Let~$\psi$ be as in Lemma~\ref{L1} and
$$ v_{e,j}(x):= j^s \psi(j^{-1}x-e).$$
The $s$-harmonicity of~$v_{e,j}$ and the property
of its support can be derived from the ones of~$\psi$.
We now prove the convergence. For this, given~$x\in B_1(e)$
we write~$p_j:= j^{-1}x-e$ and~$\epsilon_j := 1-|p_j|=1-|j^{-1}x-e|$.
We remark that
$$ 1>|x-e|^2 =|x|^2-2x\cdot e+1,$$
which implies that
\begin{equation}\label{9}
{\mbox{$|x|^2<2x\cdot e$, and
$x\cdot e>0$ for all~$x\in B_1(e)$.}}\end{equation} As a consequence
$$ |p_j|^2=|j^{-1}x-e|^2 = j^{-2}|x|^2+1-2j^{-1} x\cdot e=
1-2j^{-1}(x\cdot e)_+ + o(j^{-1})\,(x\cdot e)_+^2$$
and so
$$ \epsilon_j = j^{-1}\,(1+o(1))\,(x\cdot e)_+ .$$
Therefore, using~\eqref{St}, we have
\begin{eqnarray*}
v_{e,j}(x)&=&j^s \psi(p_j)\\&=&
j^s \big(\kappa \epsilon_j^s +o(\epsilon_j^s)\big)
\\ &=& j^s \big(\kappa j^{-s}(x\cdot e)_+^s +o(j^{-s})\big)
\\ &=& \kappa \,(x\cdot e)_+^s +o(1).\end{eqnarray*}
Integrating over~$B_1(e)$ we obtain the desired convergence.
\end{proof}

\section{Spanning the derivative of a function and
proof of Theorem~\ref{MAIN}}\label{PP}

The main result of this section is that we can
find a $s$-harmonic function with an arbitrarily large number
of derivatives prescribed. For this, we use
the standard norm notation for a given multiindex~$\alpha=(\alpha_1,\dots,
\alpha_n)\in\N^n$, according to which
$$ |\alpha|:= \alpha_1+\dots+\alpha_n.$$

\begin{theorem}\label{T1}
For any~$\beta\in\N^n$ there exist~$R> r>0$, $p\in \R^n$,
$v\in H^s(\R^n)\cap C^s(\R^n)$ such that
\begin{eqnarray}
\label{La1}
&& \left\{
\begin{matrix}
(-\Delta)^s v=0 & {\mbox{ in }} B_r(p),\\
v=0& {\mbox{ in }}\R^n\setminus B_R(p),
\end{matrix}
\right. \\
&& \label{La2} {\mbox{$D^\alpha v(p)=0$ 
for any $\alpha\in\N^n$ with~$|\alpha|\le |\beta|-1$,}}\\
&& \label{La3} {\mbox{$D^\alpha v(p)=0$
for any $\alpha\in\N^n$ with~$|\alpha|=|\beta|$ and $\alpha\ne \beta$,}}\\
&& \label{La4} {\mbox{and $D^\beta v(p)=1$.}}
\end{eqnarray}
\end{theorem}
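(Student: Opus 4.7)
The plan is to take $p=0$ (by translation) and build $v$ as a finite linear combination of translated copies of the functions $v_{e,j}$ from Corollary~\ref{C1}. For a unit vector $e$ and a parameter $t\in(0,1)$, set
\[
w_{e,t,j}(x):=v_{e,j}(x+te),
\]
which is $s$-harmonic in $B_1((1-t)e)$ and vanishes outside $B_{4j}((1-t)e)$. Since $|0-(1-t)e|=1-t$, the origin sits at distance $t$ from the boundary of $B_1((1-t)e)$, so any finite collection of these $w_{e_k,t_k,j}$ is simultaneously $s$-harmonic on a common ball $B_r(0)$ with $r>0$ depending only on $\min_k t_k$. From Corollary~\ref{C1} together with standard interior regularity for the fractional Laplacian, $w_{e,t,j}\to \kappa(x\cdot e+t)_+^s$ as $j\to\infty$ in $C^m$ on compact subsets of $B_1((1-t)e)$, for every $m\in\N$; in particular all derivatives at $0$ converge to those of the limit.

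Next I would study the ideal limits $\eta_{e,t}(x):=\kappa(x\cdot e+t)_+^s$, which satisfy
\[
D^\alpha\eta_{e,t}(0)=\kappa\,C_{|\alpha|}\,t^{s-|\alpha|}\,e^\alpha,\qquad C_k:=s(s-1)\cdots(s-k+1)\neq 0,
\]
where $e^\alpha:=e_1^{\alpha_1}\cdots e_n^{\alpha_n}$. Fix $e\in S^{n-1}$ and choose $|\beta|+1$ distinct positive numbers $t_0,\ldots,t_{|\beta|}$. The matrix $(t_i^{s-k})_{i,k}$ is a generalized Vandermonde and hence invertible, so one can solve for $\gamma_i^{(e)}$ with $\sum_i \gamma_i^{(e)} t_i^{s-m}=\delta_{m,|\beta|}$ for $m=0,\ldots,|\beta|$. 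The combination $\sum_i\gamma_i^{(e)}\eta_{e,t_i}$ then has derivatives at $0$ vanishing in all orders $0,\ldots,|\beta|-1$ and equal to $\kappa C_{|\beta|}e^\alpha$ at order $|\beta|$. As $e$ varies over $S^{n-1}$, the vectors $(e^\alpha)_{|\alpha|=|\beta|}$ span the whole space of degree-$|\beta|$ monomial coefficients, because any relation $\sum_\alpha \lambda_\alpha e^\alpha =0$ on $S^{n-1}$ would extend by homogeneity to all of $\R^n$ and force $\lambda=0$. A further finite combination over suitably many unit vectors $e^{(1)},\ldots,e^{(M)}$ therefore produces an ideal function $V^\infty$ whose derivatives at the origin match the target exactly: $D^\beta V^\infty(0)=1$ and $D^\alpha V^\infty(0)=0$ for all other $|\alpha|\le|\beta|$.

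The final step is to promote $V^\infty$ to a genuine $s$-harmonic function. Collecting the parameter pairs $(e_k,t_k)$ above (enlarged if necessary so that the ideal map below is surjective onto $\R^N$ with $N:=\binom{n+|\beta|}{n}$), consider the linear maps $L_\infty,L_j\colon\R^K\to\R^N$ defined componentwise by
\[
L_\infty(c)_\alpha := D^\alpha\!\Bigl(\sum_k c_k\,\eta_{e_k,t_k}\Bigr)(0),\qquad L_j(c)_\alpha := D^\alpha\!\Bigl(\sum_k c_k\,w_{e_k,t_k,j}\Bigr)(0)
\]
for $|\alpha|\le|\beta|$. By the derivative convergence from the first paragraph, $L_j\to L_\infty$ entrywise as $j\to\infty$, and since having full rank is an open condition in the space of linear maps, $L_j$ also becomes surjective for all $j$ large. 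One can therefore exactly solve $L_j(c^{(j)})=T$, where $T\in\R^N$ is the vector with $T_\beta=1$ and $T_\alpha=0$ for $\alpha\ne\beta$; the resulting $v:=\sum_k c_k^{(j)} w_{e_k,t_k,j}$ is $s$-harmonic in the common ball $B_r(0)$, vanishes outside some ball $B_R(0)$ containing the finitely many supports $B_{4j}((1-t_k)e_k)$, and satisfies \eqref{La2}--\eqref{La4} at $p=0$ by construction. The main obstacle I foresee is the span statement for the ideal limits: using only the naive blocks $(x\cdot e)_+^s$ (i.e.\ $t=0$) the Euler homogeneity relation $\sum_i x_i\partial_i f=sf$ collapses the Taylor map into a proper subspace of $\R^N$, so introducing the positive shift $t$ is essential to break homogeneity and let the Vandermonde inversion isolate one order of differentiation at a time.
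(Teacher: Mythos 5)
Your construction is correct in outline, but it takes a genuinely different route from the paper's. The paper proves the span statement by duality: it assumes a nonzero $c=(c_\alpha)$ annihilates every Taylor vector $\bigl(D^\alpha v(x)\bigr)_{|\alpha|\le|\beta|}$ coming from a pair $(v,x)\in\mathcal Z$, passes to the limit of the $v_{e,j}$ by integrating by parts against a test function (so that only the $L^1$ convergence of Corollary~\ref{C1} is needed), obtains $\sum_\alpha c_\alpha D^\alpha (x\cdot e)_+^s=0$ on $B_1(e)$, and then evaluates at $x=e/|\xi|$ so the relation becomes the polynomial identity $\sum_\alpha c_\alpha\, s(s-1)\cdots(s-|\alpha|+1)\,\xi^\alpha=0$ for all $\xi\in\R^n\setminus B_{1/2}$, forcing $c=0$. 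That single substitution simultaneously plays the role of your Vandermonde inversion in $t$ and of your spanning argument for $(e^\alpha)_{|\alpha|=|\beta|}$: by letting the evaluation point slide with $\xi$ rather than fixing it at the origin, the authors decouple all orders at once. Your approach fixes $p=0$, reintroduces a free parameter via the shift $t$, isolates the order $|\beta|$ with a generalized Vandermonde, and then spans the top-order block with finitely many directions $e$; this is a perfectly valid constructive alternative. Your closing remark, though, slightly overstates the situation: Euler's relation $\sum_i x_i\partial_i f=sf$ only collapses the Taylor map when the evaluation point is frozen, so the shift $t$ is essential for \emph{your} scheme but not for the theorem — the paper breaks homogeneity by varying $x$ instead.

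One step you should firm up is the claim that $w_{e,t,j}\to\kappa(x\cdot e+t)_+^s$ in $C^m$ on compact subsets of $B_1((1-t)e)$ ``by standard interior regularity.'' Corollary~\ref{C1} gives only $L^1$ convergence, and the obvious interior estimate for the fractional Laplacian bounds $\|u\|_{C^m(K)}$ by $\|u\|_{L^\infty}$ on a slightly larger ball plus a weighted tail $\int_{\R^n}|u(y)|(1+|y|)^{-n-2s}\,dy$. Here $\|v_{e,j}\|_{L^\infty(\R^n)}=j^s\|\psi\|_\infty\to\infty$, so the naive bound diverges; you need to compute the tail after the change of variables $y\mapsto j(z+e)$ and use that $\psi$ is supported in $B_3$ and decays like $\dist(\cdot,\partial B_1)^s$ near the unit sphere. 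Doing this does yield a $j$-uniform bound, so the step is true, but it is not ``standard'' without this tail estimate — and the paper's integration-by-parts device is precisely what lets the authors avoid it and work with $L^1$ convergence alone.
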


\begin{proof} We denote by~${\mathcal{Z}}$ the set containing
the couples~$(v,x)$
of all functions~$v\in H^s(\R^n)\cap C^s(\R^n)$ and points~$x\in B_r(p)$
that satisfy~\eqref{La1} for some~$R>r>0$ and~$p\in \R^n$.

We let
$$ N:=\sum_{j=0}^{|\beta|} n^j.$$
To any~$(v,x)\in {\mathcal{Z}}$ we can associate a vector in~$\R^N$
by listing all the derivatives of~$v$ up to order~$|\beta|$
evaluated at~$x$, that is
$$ \Big( D^\alpha v(x)\Big)_{|\alpha|\le |\beta|} \in\R^N.$$
We claim that the vector space spanned by this construction
exhausts~$\R^N$ (if we prove this, then
we obtain~\eqref{La2}--\eqref{La4} by writing
the vector with entry~$1$ when~$\alpha=\beta$
and~$0$ otherwise as linear combination of the above functions).

Thus we argue by contradiction, assuming
that the vector space above does not exhaust~$\R^N$
but lies in a subspace. That is, there exists~$c=(c_\alpha)_{|\alpha|\le|\beta|}\in\R^N\setminus\{0\}$
such that
\begin{equation}\label{X0}
\sum_{|\alpha|\le|\beta|}c_\alpha D^\alpha v(x) =0
\end{equation}
for any~$(v,x)\in{\mathcal{Z}}$. 

We observe that the couple~$(v_{e,j},x)$, with $v_{e,j}$
given by~Corollary~\ref{C1} and~$x\in B_1(e)$ belongs to~${\mathcal{Z}}$. Therefore, fixed any~$\xi\in\R^n\setminus B_{1/2}$ and letting~$e:=\xi/|\xi|$,
we have that~\eqref{X0} holds true when~$v:=v_{e,j}$ and~$x\in B_1(e)$. 

Accordingly, for every~$\varphi\in C^\infty_0(B_1(e))$, 
we use integration by parts and the convergence result in Corollary~ \ref{C1}
to obtain that
\begin{eqnarray*}
0 &=& \lim_{j\rightarrow+\infty} \int_{\R^n} \sum_{|\alpha|\le|\beta|}
c_\alpha D^\alpha v_{e,j}(x)
\,\varphi(x)\,dx
\\
&=& \lim_{j\rightarrow+\infty} \int_{\R^n} \sum_{|\alpha|\le|\beta|}
(-1)^{|\alpha|}
c_\alpha \,v_{e,j}(x)
\,D^\alpha \varphi(x)\,dx
\\ &=&
\kappa 
\int_{\R^n} \sum_{|\alpha|\le|\beta|} (-1)^{|\alpha|}
c_\alpha \,
(x\cdot e)_+^s \,D^\alpha \varphi(x)\,dx
\\ &=&
\kappa
\int_{\R^n} 
\sum_{|\alpha|\le|\beta|} c_\alpha \,D^\alpha
(x\cdot e)_+^s \,\varphi(x)\,dx.
\end{eqnarray*}
Consequently, for any~$x\in B_1(e)$,
\begin{equation}\label{10}\sum_{|\alpha|\le|\beta|} c_\alpha \,D^\alpha
(x\cdot e)_+^s =0.\end{equation}
Recalling~\eqref{9}, we observe that, for any~$x\in B_1(e)$,
$$ D^\alpha
(x\cdot e)_+^s = s\,(s-1)\,\dots\,(s-|\alpha|+1)\, (x\cdot e)_+^{s-|\alpha|}
\, e_1^{\alpha_1}\,\dots\,e_n^{\alpha_n}.$$
So we take~$x:=e/|\xi|\in B_1(e)$, and we obtain
$$ D^\alpha
(x\cdot e)_+^s \Big|_{x=e/|\xi|}
= s\,(s-1)\,\dots\,(s-|\alpha|+1)\, |\xi|^{-s}
\, \xi_1^{\alpha_1}\,\dots\,\xi_n^{\alpha_n}.$$
Hence we write~\eqref{10} as
\begin{equation}\label{11}
\sum_{|\alpha|\le|\beta|} c_\alpha
s\,(s-1)\,\dots\,(s-|\alpha|+1) \,\xi^{\alpha}=0,\end{equation}
for any~$\xi\in\R^n\setminus B_{1/2}$.
We remark that equation~\eqref{11}
says that a polynomial in the variable~$\xi$ is identically equal to~$0$
in an open set of~$\R^n$,
therefore all its coefficients must vanish,
namely
\begin{equation}\label{12} s\,(s-1)\,\dots\,(s-|\alpha|+1)
\, c_\alpha=0\end{equation}
for any~$|\alpha|\le|\beta|$.
Notice that none of the terms~$s$, $(s-1)$, $\dots$, $(s-|\alpha|+1)$ vanish
since~$s$ is not an integer. Using this 
we deduce from~\eqref{12} that~$c_\alpha=0$ for any~$|\alpha|\le|\beta|$,
that is~$c=0$, against our assumptions.
\end{proof}

We stress that Theorem~\ref{T1} reflects a purely nonlocal feature.
Indeed, in the local case (i.e. when~$s=1$) the statement of
Theorem~\ref{T1} would be clearly false when~$|m|\ge2$, since the sum
of the pure second derivatives of any harmonic
function must vanish and cannot sum up to~$1$.

With the aid of Theorem~\ref{T1}, we can now complete the proof
of Theorem~\ref{MAIN}:

\begin{proof}[Proof of Theorem~\ref{MAIN}]
By Lemma~\ref{SW},
we can reduce ourselves to the case in which~$f$
is a polynomial.
Consequently, the linearity of the fractional Laplace operator
allows us to
reduce to the case in which~$f$ is a monomial, say
$$f(x)=\frac{x^\beta}{\beta!}$$
for some~$\beta\in\N^n$.
Then we take~$v$ as in Theorem~\ref{T1} and we define
$$ u_\eta(x):= \eta^{-|\beta|} v( \eta x+p),$$
with~$\eta\in(0,1/2)$ to be taken conveniently small in the sequel
(in dependence of~$\epsilon$ that is fixed in the statement of
Theorem~\ref{MAIN}).

The function~$u_\eta$ will be called, for the sake of shortness, simply~$u$
and it will give, for a suitable choice of~$\eta$, the function
seeked in the statement of Theorem~\ref{MAIN}.

Let also~$g(x)
:=u(x)-f(x)=u(x)-(\beta!)^{-1} x^\beta$.
By Theorem~\ref{T1} we know that~$D^\alpha g(0)=0$
for any~$\alpha\in\N^n$ with~$|\alpha|\le|\beta|$.
Furthermore, if~$|\alpha|\ge|\beta|+1$,
$$ |D^\alpha g(x)|= \eta^{|\alpha|-|\beta| } |D^\alpha v(\eta x+p)|
\le C_{|\alpha|}\,\eta \|v\|_{C^{|\alpha|}(B_{1/2}(p))},$$
for any~$x\in B_1$, for some~$C_{|\alpha|}>0$.
As a consequence, defining~${k}':={k}+|\beta|+1$
and fixed any~$\gamma\in\N^n$ with~$|\gamma|\le{k}'-1$ and
any~$x\in B_1$,
we obtain by a Taylor expansion that
$$ D^\gamma g(x) = \sum_{|\beta|+1\le|\gamma|+|\alpha|\le {k}'-1}\frac{
D^{\gamma+\alpha} g(0)}{\alpha!}x^\alpha
+\sum_{|\gamma|+|\alpha|={k}'} 
\frac{{k}'}{\alpha!}\int_0^1 (1-t)^{{k}'-1}
D^{\gamma+\alpha} g(tx)\,dt \,x^\alpha $$
and so~$|D^\gamma g(x)|\le C\eta$, with~$C>0$ possibly depending also on~$v$.

Since this is valid for any~$x\in B_1$ we obtain that
$$ \|u-f\|_{C^{{k}}(B_1)}=
\|g\|_{C^{{k}}(B_1)}\le
\| g\|_{C^{{k}'-1}(B_1)}\le C\eta,$$
for some~$C>0$,
which implies the statement of Theorem~\ref{MAIN}
as long as~$\eta\in(0,C^{-1}\epsilon)$.
\end{proof}

\begin{bibdiv}
\begin{biblist}

\bib{claudia}{article}{
   author={Bucur, Claudia Dalia},
   title={Some observations on the Green
function for the ball in the fractional Laplace framework},
   journal={http://arxiv.org/abs/1502.06468},
  date={2015},
}

\bib{CRS}{article}{
   author={Caffarelli, L.},
   author={Roquejoffre, J.-M.},
   author={Savin, O.},
   title={Nonlocal minimal surfaces},
   journal={Comm. Pure Appl. Math.},
   volume={63},
   date={2010},
   number={9},
   pages={1111--1144},
   issn={0010-3640},
   review={\MR{2675483 (2011h:49057)}},
   doi={10.1002/cpa.20331},
}

\bib{CS}{article}{
   author={Caffarelli, Luis},
   author={Silvestre, Luis},
   title={An extension problem related to the fractional Laplacian},
   journal={Comm. Partial Differential Equations},
   volume={32},
   date={2007},
   number={7-9},
   pages={1245--1260},
   issn={0360-5302},
   review={\MR{2354493 (2009k:35096)}},
   doi={10.1080/03605300600987306},
}
	
\bib{guida}{article}{
   author={Di Nezza, Eleonora},
   author={Palatucci, Giampiero},
   author={Valdinoci, Enrico},
   title={Hitchhiker's guide to the fractional Sobolev spaces},
   journal={Bull. Sci. Math.},
   volume={136},
   date={2012},
   number={5},
   pages={521--573},
   issn={0007-4497},
   review={\MR{2944369}},
   doi={10.1016/j.bulsci.2011.12.004},
}

\bib{FF}{article}{
   author={Fall, Mouhamed Moustapha},
   author={Felli, Veronica},
   title={Unique continuation property and
local asymptotics of solutions to
fractional elliptic equations},
   journal={Commun. Partial Differ. Equations},
   volume={39},
   date={2014},
   number={2},
   pages={354--397},
   issn={0360-5302; 1532-4133/e},
   review={\MR{3169789}},
   doi= {10.1080/03605302.2013.825918},
}

\bib{Kas}{article}{
   author={Kassmann, Moritz},
   title={A new formulation of Harnack's inequality for nonlocal operators},
   language={English, with English and French summaries},
   journal={C. R. Math. Acad. Sci. Paris},
   volume={349},
   date={2011},
   number={11-12},
   pages={637--640},
   issn={1631-073X},
   review={\MR{2817382 (2012g:31016)}},
   doi={10.1016/j.crma.2011.04.014},
}

\bib{landkof}{book}{
   author={Landkof, N. S.},
   title={Foundations of modern potential theory},
   note={Translated from the Russian by A. P. Doohovskoy;
   Die Grundlehren der mathematischen Wissenschaften, Band 180},
   publisher={Springer-Verlag, New York-Heidelberg},
   date={1972},
   pages={x+424},
   review={\MR{0350027 (50 \#2520)}},
}

\bib{ros-serra}{article}{
   author={Ros-Oton, Xavier},
   author={Serra, Joaquim},
   title={The Dirichlet problem for the fractional Laplacian: Regularity up
   to the boundary},
   journal={J. Math. Pures Appl. (9)},
   volume={101},
   date={2014},
   number={3},
   pages={275--302},
   issn={0021-7824},
   review={\MR{3168912}},
   doi={10.1016/j.matpur.2013.06.003},
}

\bib{silvestre}{thesis}{
   author={Silvestre, Luis Enrique},
   title={Regularity of the obstacle problem for a
fractional power of the {L}aplace operator},
   date={PhD Thesis, University of Texas at Austin, 2005},
}

\bib{stein}{book}{
   author={Stein, Elias M.},
   title={Singular integrals and differentiability properties of functions},
   series={Princeton Mathematical Series, No. 30},
   publisher={Princeton University Press, Princeton, N.J.},
   date={1970},
   pages={xiv+290},
   review={\MR{0290095 (44 \#7280)}},
}

\end{biblist}
\end{bibdiv}

\end{document}